%
%
%
%
%
\RequirePackage{fix-cm}
\documentclass[smallextended]{svjour3}       
\smartqed  
\usepackage{graphicx}
\usepackage{amssymb}
 \usepackage{amsmath}

%
%
%
\newcommand{\R}{{\mathbb{R}}}

%
\begin{document}

\title{Canard Phenomenon in a  modified Slow-Fast  Leslie-Gower and Holling type scheme model
}


\author{B. Ambrosio       \and
        M.A. Aziz-Alaoui  \and R. Yafia 
}


\institute{B. Ambrosio, M.A. Aziz-Alaoui\at
              Normandie Univ, UNIHAVRE, LMAH, FR-CNRS-3335, ISCN, 76600 Le Havre, France
              \email{benjamin.ambrosio@univ-lehavre.fr}           
            \and 
            R. Yafia \at    Ibn Zohr University, Agadir, Le Havre        
}

\date{Received: date / Accepted: date}

\maketitle

\begin{abstract}
Geometrical Singular Perturbation Theory has been  successful  to investigate a broad range of biological problems with different time scales. The aim of this paper is to apply this theory to a predator-prey model of modified Leslie-Gower type for which we consider that  prey reproduces
mush faster than  predators. This naturally leads to introduce a small parameter $\epsilon$ which gives rise to a slow-fast system. This system has a special folded singularity which has not been analyzed in the classical work \cite{KS01}.  We use the blow-up technique to visualize the behavior near this fold point $P$. Outside of this region the dynamics are given by classical singular perturbation theory. This allows to quantify geometrically the attractive  limit-cycle with an error of $O(\epsilon$) and shows that it exhibits the \textit{canard} phenomenon while crossing $P$.
\end{abstract}

\section{Introduction}
\label{sec:intro}
In \cite{az03}, the authors introduced the following model:
\begin{equation}\label{DLsystemaz03}
    \left\{ \begin{array}{rcl}
    \dot{x}=\left(r_{1}-b_{1}x-\frac{a_{1}y}{x+k_{1}}\right)x,\\
    \\
    \dot{y}=\left(r_{2}-\frac{a_{2}y}{x+k_{2}}\right)y \hspace{1cm}
\end{array} \right.
\end{equation}
where $x$ represent the prey and $y$ the predator. This two species food chain model describes a prey population $x$ which serves as food for a predator $y$.
The model parameters $r_1,r_2,a_1, a_2, b, k_1$ and $k_2$ are assumed to be positive. They are defined
as follows: $r_1$ (resp. $r_2$) is the growth rate of prey $x$ (resp. predator $y$), $b_1$ measures the strength of competition among individuals of species $x$, $a_1$  (resp. $r_2$)
is the maximum value of the per capita reduction rate of $x$  (resp. $y$) due to $y$, $k_1$ (respectively, $k_2$) measures the extent to which environment provides protection to prey $x$ (respectively, to the predator $y$). There is a wide variety of natural systems which may be modelled by system \eqref{DLsystemaz03}, see \cite{Ha91,Up97}. It may, for example, be considered as a representation of an insect pest–spider food chain. 
Let us mention that the first equation of system \eqref{DLsystemaz03} is standard. The second equation is rather absolutely not standard. Recall that the Leslie-Gower formulation is based on the assumption that reduction in a predator population has a reciprocal relationship with per capita availability of its preferred food. This leads to replace the classical growing term ($+xy$) in Lotka-Volterra predator equation by a decreasing term ($-y^2$).  Indeed, Leslie introduced a predator prey model where the carrying capacity of the predator environment is proportional to the number of prey. These considerations lead to the following equation for predator $\dot{y}=r_2y(1-\frac{y}{\alpha x}).$ The term $\frac{y}{\alpha x}$ of this equation is called the Leslie–Gower term.  In case of severe scarcity, adding a positive constant to the denominator, introduces a maximum decrease rate, which stands for environment protection. Classical references include \cite{Le48,Le60,Ma73,RM63}.
  In order to simplify \eqref{DLsystemaz03}, we proceed to the following change of variables:\\
$u(r_{1}t)=\frac{b_{1}}{r_{1}}x(t)$,
$v(r_{1}t)=\frac{a_{2}b_{1}}{r_{1}r_{2}}y(t)$,
 $a=\frac{a_{1}r_{2}}{a_{2}r_{1}}$,
$\epsilon=\frac{r_{2}}{r_{1}}$,
$e_{1}=\frac{b_{1}k_{1}}{r_{1}}$,
$e_{2}=\frac{b_{1}k_{2}}{r_{1}}$, $t'=r_{1}t$.\\
For convenience, we drop the primes on $t$.
We obtain the following system:
\begin{equation}\label{DLsystem}
 \left\{ \begin{array}{rcl}
u_t&=&u\left(1-u\right)-\frac{auv}{u+e_{1}},\\
v_t&=&\epsilon
v\left(1-\frac{v}{u+e_{2}}\right).\hspace{2cm}
\end{array} \right.
\end{equation}
We assume here that the prey reproduces much faster than the predator, $i.e.$ $r_1>>r_2$, which implies that $\epsilon$ is small.
Note that there are special solutions: $u=0,v_t=\epsilon v(1-\frac{v}{e_2})$ and $v=0,u_t=u(1-u)$. Hence, the quadrant $(0\leq u \leq 1, v\geq 0)$ is positively invariant for \eqref{DLsystem}. We restrict our analysis to this quadrant.  
We also assume the following conditions which ensure the existence of a unique attractive limit-cycle for \eqref{DLsystem}:
\[ae_2<e_1, ae_2 \mbox{ not to close of } e_1,\]
and, 
\[u^*<\frac{1-e_1}{2}, u^* \mbox{ not to close of } \frac{1-e_1}{2},\]
where $u^*$ is solution of
\[u+e_2=\frac{1}{a}(1-u)(u+e_1).\]
 Under these asumptions there are 4 fixed points in the positive quadrant:
  \[P_1=(0,0), P_2=(0,e_2), P_3=(1,0), P_4=(u^*,g(u^*)),\]
  where
    \[g(u)=\frac{1}{a}(1-u)(u+e_1).\]
 \begin{figure}[ht]
 \centering
  \includegraphics[scale=0.7]{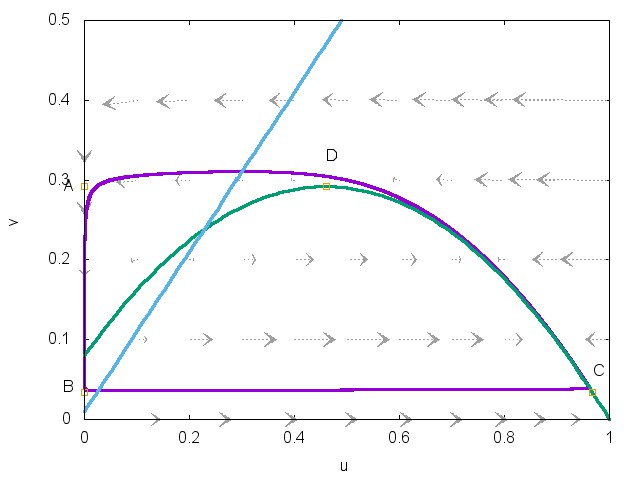}
  \label{fig:limcyclpdf}
\caption{Limit cycle and nullclines of system \eqref{DLsystem} for  $a=1$
, $e_{1}=0.08,$ and $e_{2}=0.01$, $\epsilon=0.01$}
\end{figure}
They also prevent additional singularities for the folded points.  Figure \ref{fig:limcyclpdf} illustrates nullclines and the attractive limit-cycle for \eqref{DLsystem}. 
Our aim is now to characterize the limit-cycle. In the following section we proceed to the classical slow-fast analysis which allows to describe the trajectories outside of a neighborhood of a special fold-point, induced by the nullcline $u=0$, which we will call $P$. In the third section, we use the blow-up technique to analyze the trajectories near this special fold point $P$.
 Now, let us fix a small value $\alpha>0$ and define a cross section $V=\{(u,v)\in \R^2; u>0, \, v=\frac{e_1}{a}+\alpha\}$. Then, by the regularity of the flow with regard to $\epsilon$,  the limit cycle crosses $V$ at a point $(k(\alpha)\epsilon+o(\epsilon),\frac{e_1}{a}+\alpha)$ (below, for convenience, we do not write the dependence on $\alpha$).  
We have the following theorem. Let 
\[\bar{u}=\frac{1-e_1}{2},\]
and 
\[A=(0,g(\bar{u})), B=(0,\frac{e_1}{a}+\alpha+\frac{c_2}{c_1k}), C=(u_*,\frac{e_1}{a}+\alpha+\frac{c_2}{c_1k}), D=(\bar{u},g(\bar{u})),\]
where $u_*$ is such that $g(u_*)=\frac{e_1}{a}+\alpha+\frac{c_2}{c_1k}$ and 
\[c_1=\frac{1-e_1}{e_1}, \, c_2=\frac{e_1}{a}(1-\frac{e_1}{ae_2}).\] 
Let $\gamma'$ be the closed curve defined by:
\[\gamma'=[A,B]\cup [B,C] \cup \zeta \cup [D,A]\]
where,
\[\zeta= \{(u,g(u)); \bar{u}\leq u \leq u_*\}.\]
\begin{theorem}
\label{th:maintheorem}
All the trajectories not in $u=0$ and $v=0$, and different from the fixed point $P_4$ evolve asymptotically towards a unique limit-cycle $\gamma$ which is $O(\epsilon)$ close of $\gamma'$.
\end{theorem}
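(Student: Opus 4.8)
The plan is to exhibit $\gamma'$ as the singular ($\epsilon = 0$) limit of a Poincaré return map and to perturb each of its four arcs separately, using Fenichel theory on the normally hyperbolic pieces, the fold analysis of \cite{KS01} at the regular fold $D$, and the blow-up of Section~3 at the special fold $P$. First I would identify the role of each arc of $\gamma'$ in the layer and reduced problems. The arc $\zeta=\{(u,g(u));\ \bar u\le u\le u_*\}$ lies on the attracting branch of the critical manifold $\{v=g(u)\}$, where $\partial_u[u(1-u)-\frac{auv}{u+e_1}]<0$, i.e. $u>\bar u$; the segment $[A,B]$ lies on the attracting part of the second branch $\{u=0\}$, which is a stable fast equilibrium precisely for $v>\frac{e_1}{a}$; and $[B,C]$, $[D,A]$ are horizontal fast fibres of the layer problem landing respectively on $\zeta$ at $C$ and on $\{u=0\}$ at $A$. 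The point $D=(\bar u,g(\bar u))$ is the non-degenerate fold (maximum of $g$), while $P=(0,\frac{e_1}{a})$, the intersection of the two branches, is the special folded singularity.

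Next I would build a first-return map $\Pi$ on a small transversal $\Sigma$ cutting the attracting arc $\zeta$ at a single point of $\gamma'$, and follow the loop. The reduced flow on $\zeta$ carries the orbit to the fold $D$, since $v_t=\epsilon v(1-\frac{v}{u+e_2})$ is positive on the attracting branch $u>\bar u$ (because $P_4$ sits on the repelling branch by the hypothesis $u^*<\bar u$). The local analysis of \cite{KS01} then produces a jump off $D$ with the standard $O(\epsilon^{2/3})$ corrections, landing the orbit via $[D,A]$ on the attracting part $\{u=0,\ v>\frac{e_1}{a}\}$ near $A$. There the reduced flow $v_t=\epsilon v(1-\frac{v}{e_2})<0$ (as $g(\bar u)>\frac{e_1}{a}>e_2$ under $ae_2<e_1$) drives $v$ downward from $A$ toward $P$; the \emph{canard} passage through $P$ is resolved by the blow-up of Section~3, which fixes the exit point $B=(0,\frac{e_1}{a}+\alpha+\frac{c_2}{c_1k})$ up to $o(\epsilon)$, equivalently the crossing of $V$ at $(k\epsilon+o(\epsilon),\frac{e_1}{a}+\alpha)$, and hands the orbit to the fast fibre $[B,C]$, which contracts it back onto the Fenichel slow manifold $O(\epsilon)$-close to $\zeta$ and hence back to $\Sigma$. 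The singular limit of $\Pi$ is the point $\gamma'\cap\Sigma$.

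I would then show that $\Pi$ is a uniform contraction mapping a fixed interval of $\Sigma$ strictly into itself. The key point is that the two attracting slow arcs $\zeta$ and $[A,B]$ each contribute a transverse contraction of order $e^{-c/\epsilon}$, whereas the only expansion — accumulated on the short repelling piece of $\{u=0\}$ between $P$ and the exit $B$ — is controlled by the entry--exit relation extracted from the blow-up and is dominated by the preceding contraction, so that $|\Pi'|<1$ uniformly for small $\epsilon$. The Banach fixed point theorem then yields a unique, automatically attracting, fixed point, i.e. the limit cycle $\gamma$. Global attraction of every trajectory other than those on $\{u=0\}$, $\{v=0\}$ and the fixed point $P_4$ follows by combining positive invariance of the quadrant $0\le u\le1$, $v\ge0$ with a Poincaré--Bendixson argument: the only interior equilibrium $P_4$ is a repeller under the stated hypotheses, the axes are invariant, and each interior orbit is eventually trapped in the thin annulus around $\gamma'$, hence enters the basin of $\Pi$. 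Finally, $O(\epsilon)$-closeness of $\gamma$ to $\gamma'$ is read off arc by arc from the Fenichel estimates on $\zeta$ and $[A,B]$, the fast-fibre estimates on $[B,C]$ and $[D,A]$, and the blow-up estimate at $P$.

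The main obstacle, and the genuinely new ingredient, is the passage through the special fold $P$: since this folded singularity is not of the type treated in \cite{KS01}, the classical fold estimates do not apply, and one must instead use the blow-up of Section~3 both to justify that the canard crossing is $O(\epsilon)$-regular and to pin down the exact exit height of $B$, namely the correction $\frac{c_2}{c_1k}$. Everything else — the Fenichel perturbation of $\zeta$ and $[A,B]$, the jump at $D$, and the contraction estimate — is standard once this passage is understood. A secondary technical point is reconciling the $O(\epsilon^{2/3})$ corrections inherent to the regular fold $D$ with the claimed global $O(\epsilon)$ bound; I would handle it by measuring the deviation transversally to $\gamma'$ away from an $O(\epsilon^{2/3})$-neighbourhood of $D$, where it is genuinely $O(\epsilon)$, and by noting that $\Pi$ is insensitive to the inner fold corrections at leading order.
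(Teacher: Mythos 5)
Your proposal is correct in outline and rests on the same geometric ingredients as the paper: Fenichel theory along the attracting arcs, the fold analysis of \cite{KS01} at $D$, and the blow-up of Section~3 to resolve the canard passage at $P$ and fix the exit height $\frac{e_1}{a}+\alpha+\frac{c_2}{c_1k}$. Where you genuinely differ is in how existence, uniqueness and attractivity are obtained. The paper deduces existence from Poincar\'e--Bendixson, refers to \cite{Da04} for uniqueness, and devotes its own analysis solely to the $O(\epsilon)$-approximation, namely the chart-$K_2$ computation giving $y(t^*)=y(0)+\frac{c_2}{kc_1}+O(\epsilon)$. You instead build a Poincar\'e return map $\Pi$ and extract existence, uniqueness and attraction simultaneously from a contraction/fixed-point argument, invoking Poincar\'e--Bendixson only to trap orbits into the basin of $\Pi$. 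Your route is the standard relaxation-oscillation scheme, is more self-contained than the paper's (no external reference is needed for uniqueness, and the global attraction asserted in the theorem is actually argued rather than left implicit), and your point about excising an $O(\epsilon^{2/3})$-neighbourhood of $D$ when measuring the distance to $\gamma'$ repairs a real imprecision that the paper passes over in silence.

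Three caveats. (i) Your opening claim that $[A,B]$ lies on the attracting part of $\{u=0\}$ contradicts your own later, correct, description: since $c_2<0$, the point $B$ lies below $\frac{e_1}{a}$, so the portion of $[A,B]$ between $P$ and $B$ is on the repulsive part $\{0\le v<\frac{e_1}{a}\}$; that portion is precisely the canard segment. (ii) The assertion that the expansion on this repelling piece ``is dominated by the preceding contraction'' is not the right mechanism: by the entry--exit balance, the exit point $B$ is exactly where the accumulated expansion catches up with the accumulated contraction, so the passage along $\{u=0\}$ is transversally neutral at leading order; the uniform estimate $|\Pi'|<1$ must instead come from the $e^{-c/\epsilon}$ contraction of the fold passage at $D$ (quoted from \cite{KS01} in Section~2) together with the contraction onto the Fenichel manifold near $\zeta$, the canard passage being merely non-expanding. (iii) Like the paper, you treat the crossing of $V$ at $u=k\epsilon+o(\epsilon)$ and the chart-$K_2$ formula for $B$ as supplied by Section~3; note that this applies the $O(r_2)$-closeness statement of Section~3 over rescaled times of order $\epsilon^{-2/3}$ (the descent from $y_2\sim\alpha\epsilon^{-2/3}$ at $V$ down to the exit), a gap you inherit from the paper rather than introduce, but one which your contraction argument does not remove.
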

\begin{proof}
The existence of the cycle results from Poincare-Bendixon theorem. For uniqueness, we refer to \cite{Da04}. The approximation by $\gamma'$ results from slow-fast analysis and the blow-up technique which will be carried out in sections 2 and 3. 
\end{proof}
\begin{remark}
According to  \cite{BC81,KS01,SW01}, the canard phenomenon occurs when a trajectory crosses a folded point from the attractive manifold and  follows the repulsive manifold during a certain amount of time before going away. We will see that according to this definition, the canard phenomena occurs here. This explains why we have introduced $\alpha$ and $k$.
\end{remark}

\section{Slow-Fast Analysis}
In this section, we proceed to a classical slow-fast analysis, see for example \cite{He10,Jo95,Ka99,KS01}.  We study the layer system and the reduced system.  The layer system is obtained by setting $\epsilon=0$ in system \eqref{DLsystem}.
It reads as,
\begin{equation}\label{LayerDLsystem} \left\{ \begin{array}{rcl}
u_t&=&u\left(1-u\right)-\frac{auv}{u+e_{1}}=F(u,v),\\
v_t&=&0 \hspace{2cm}
\end{array} \right.
\end{equation}
The stationary points of this system are given by:
\begin{equation}\label{CriticalManifold} 
M_0=\{u=0 \mbox{ or } v=\frac{1}{a}(1-u)(u+e_1)=g(u)\}
\end{equation}
The set $M_0$ is called the critical manifold. Outside from a neighborhood of this manifold, for $\epsilon$ small,  regular perturbation theory ensures that trajectories of system \eqref{DLsystem} ar $O(\epsilon)$-close to those  of system \eqref{LayerDLsystem}. The trajectories of system \eqref{LayerDLsystem} are tangent to the $u$-axis, which justifies the name of  ``layer system''. These trajectories are the fast trajectories. 
Furthermore, the Fenichel theory, see \cite{Fe79} or references cited above,  provides the existence of a locally invariant manifold $O(\epsilon)$-close to the critical manifold $M_0$ for compact subsets of $M_0$ where $F'_u(u,v)\neq 0$. Thus,  we have to evaluate $F'_u(u,v)$  on the critical manifold. The parts of $M_0$ where  $F'_u(u,v)<0$ is called the attractive part of the critical manifold. Analogously, the part of $M_0$ where  $F'_u(u,v)>0$ is called the repulsive part of the critical manifold. Now, we compute these subset of $M_0$.
We start our computations with the case $u=0$. We have,
\begin{equation}
F'_u(0,v)=1-\frac{av}{e_1}.
\end{equation}
Therefore,
\begin{equation}
F'_u(0,v)>0 \Leftrightarrow v<\frac{e_1}{a}.
\end{equation}
Now, we deal with the case $v=\frac{1}{a}(1-u)(u+e_1)$. We have
\begin{equation}
F'_u(u,v)=1-2u-av\frac{e_1}{(u+e_1)^2}.
\end{equation}
For $v=\frac{1}{a}(1-u)(u+e_1)$, we obtain,
\begin{equation}
F'_u(u,v)=\frac{u}{u+e_1}(-2u+(1-e_1)).
\end{equation}
Therefore,
\begin{equation}
F'_u(u,g(u))>0 \Leftrightarrow u<\frac{1-e_1}{2}=\bar{u}.
\end{equation}
Finally, the attractive critical manifold $M_{0,a}$ is given by $u=0$ and $v>\frac{e_1}{a}$, or $v=g(u)$ and $\frac{1-e_1}{2}<u\leq 1$:
\[M_{0,a}=\{(0,v);v>\frac{e_1}{a}\}\cup \{(u,g(u));\frac{e_1}{a}<u\leq 1\}.\]
Analogously, the repulsive critical manifold $M_{0,r}$ is given by:
\[M_{0,r}=\{(0,v);0\leq v<\frac{e_1}{a}\}\cup \{(u,g(u));0\leq u <\bar{u}\}.\]
 The non-hyperbolic points of the critical manifold, or fold points, where $F'(u,v)=0$ are   $B=(0,\frac{e_1}{a})$ and $D=(\bar{u},g(\bar{u})).$
Now, we look at the reduced system. The reduced system gives the slow-trajectories ie., the trajectories within the critical manifold which persists for $\epsilon$ small within the locally invariant manifold. It is obtained by setting $\epsilon=0$ after  the change of time $\tau=\epsilon t$ in \eqref{DLsystem}.  It reads as (to avoid complications, we keep the notation with $t$, but it should be with $\tau$),
\begin{equation}\label{ReducedDLsystem}
 \left\{ \begin{array}{rcl}
0&=&u\left(1-u\right)-\frac{auv}{u+e_{1}},\\
v_t&=&
v\left(1-\frac{v}{u+e_{2}}\right).\hspace{2cm}
\end{array} \right.
\end{equation}
For $u=0$, we obtain,
\begin{equation}\label{ReducedDLsystemII}
v_t=v(1-\frac{v}{e_{2}}).
\end{equation}
This implies that 
\begin{equation*}
v_t>0 \Leftrightarrow v<e_2.
\end{equation*}
Note  that $(0,e_2)$ is the fixed point $P_2$ of the original system.
For, $v=g(u)$. We have
\begin{equation*}
\begin{array}{rcl}
v_t&>&0\\
\Leftrightarrow  v(1-\frac{v}{u+e_2})&>&0\\
 \Leftrightarrow v<u+e_2
\end{array}
\end{equation*}
which reads also
\[v_t=g'(u)u_t=g(u)(1-\frac{g(u)}{u+e_2}).\]
Therefore,
\[u_t=\frac{g(u)}{g'(u)}(1-\frac{g(u)}{u+e_2}).\]
The points where $g'=0$ correspond to a jump-point if $g(u) \neq u+e_2$, since in this case,  we have at this point, $u_t=-\infty$.
The analysis of layer and reduced system gives the qualitative behavior of the system outside the neighborhood of the fold-points. Trajectories reach the slow attractive manifold, and follow it according to the dynamics, or are repelled  by the repulsive slow manifold. Furthermore, the behavior near the  jump-point $(\bar{u},g(\bar{u})),$ has been rigorously described in \cite{KS01}. Trajectories reaching a neighborhood of the fold point from the right exit the neighborhood at left along fast fibers, and there is a contraction of rate $e^{-\frac{c}{\epsilon}}$ for some constant $c$ between arriving and exiting  trajectories.  The figure \ref{fig:limcyclpdf} illustrates this behavior. 
Therefore, it remains only to analyze the behavior of trajectories near the fold point $P=(0,\frac{e_1}{a})$. This is what we wish to do in the following section by using the blow-up technique. Note that this has not been done in \cite{KS01} since it is assumed there that critical manifold can be written $v=\varphi(u)$ with $\varphi'(0)=0$ and $\varphi''(0)\neq0$, which is not the case here since  $M_0$ writes $u=0$ in a neighborhood of the fold-point $P=(0,\frac{e_1}{a})$.
\begin{remark}
Canards may appear near the fold point $D=(\bar{u},g(\bar{u})),$  when 
\begin{equation}
\label{eq:sing}
g(u)\simeq u+e_2
\end{equation}. As we have already mentioned, canards are solutions that follow the repulsive manifold during a certain amount of time after crossing the fold before being repelled. They have been discovered by french mathematicians with non standard analysis and studied after with geometrical singular perturbation theory, see \cite{BC81,KS01,SW01}.   Our assumptions  prevent the apparition of canards near $D$. Near $P=(0,\frac{e_1}{a})$, we have canards as it is stated in theorem \ref{th:maintheorem}  The condition $e_2\simeq \frac{e_1}{a}$, which is the analog of \eqref{eq:sing} for $P$ would lead to a higher singularity. We don't consider this case here and leave it for a forthcoming work.
\end{remark}

\section{Blow-up technique near the fold-point $P=(0,\frac{e_1}{a})$.} 
The following proposition gives the formulation of \eqref{DLsystem} when written around $(0,\frac{e_1}{a})$:
\begin{proposition}
Near the fold point $(0,\frac{e_1}{a})$, system \eqref{DLsystem} rewrites:
\begin{equation}
\label{eq:arroundfoldpointwithc1}
\begin{array}{rcl}
\dot{x}&=&(c_1x^2-\frac{a}{e_1}xy+O(||(x,y)||^3)\\\
\dot{y}&=&\epsilon(c_2+ \frac{e_1^2}{a^2e_2^2}x+(1-\frac{2e_1}{ae_2})y+O(||(x,y)||^2)\\
\dot{\epsilon}&=&0
\end{array}
\end{equation}
where
\[c_1=\frac{1-e_1}{e_1}, \, c_2=\frac{e_1}{a}(1-\frac{e_1}{ae_2}).\]
\end{proposition}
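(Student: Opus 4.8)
The plan is to recenter the system at the fold point by the affine change of variables $x=u$ and $y=v-\frac{e_1}{a}$, which sends $P=(0,\frac{e_1}{a})$ to the origin, and then to Taylor-expand the two rational nonlinearities $\frac{1}{u+e_1}$ and $\frac{1}{u+e_2}$ about $u=0$. Since $\frac{e_1}{a}$ is a constant, $\dot x=u_t$ and $\dot y=v_t$ directly, so no further adjustment to the vector field is needed. Appending the trivial equation $\dot\epsilon=0$ is purely bookkeeping: it promotes $\epsilon$ to a phase-space variable so that the blow-up of the next section can be applied at the point $(x,y,\epsilon)=(0,0,0)$.

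For the first equation, substituting $v=y+\frac{e_1}{a}$ into $u(1-u)-\frac{auv}{u+e_1}$ splits it as $x-x^2-\frac{e_1x}{x+e_1}-\frac{axy}{x+e_1}$. The decisive cancellation comes from $\frac{e_1x}{x+e_1}=x-\frac{x^2}{e_1}+O(x^3)$: the linear terms $x$ cancel, and the surviving quadratic term $(\frac{1}{e_1}-1)x^2=c_1x^2$ produces exactly $c_1=\frac{1-e_1}{e_1}$. Expanding $\frac{axy}{x+e_1}=\frac{a}{e_1}xy+O(x^2y)$ gives the mixed term $-\frac{a}{e_1}xy$, with the remainder absorbed into $O(||(x,y)||^3)$.

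For the second equation, after factoring out $\epsilon$ I would write the bracket as $(y+\frac{e_1}{a})-\frac{(y+\frac{e_1}{a})^2}{x+e_2}$ and expand $\frac{1}{x+e_2}=\frac{1}{e_2}-\frac{x}{e_2^2}+O(x^2)$. The degree-zero term is $\frac{e_1}{a}-\frac{(e_1/a)^2}{e_2}=\frac{e_1}{a}(1-\frac{e_1}{ae_2})=c_2$; setting $y=0$ and reading the linear-in-$x$ part of $-\frac{(e_1/a)^2}{x+e_2}$ gives $\frac{(e_1/a)^2}{e_2^2}=\frac{e_1^2}{a^2e_2^2}$; and the linear-in-$y$ part, coming from the $1$ in $(y+\frac{e_1}{a})$ together with the $2(e_1/a)y$ cross term of $(y+\frac{e_1}{a})^2$ at $x=0$, is $1-\frac{2e_1}{ae_2}$. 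Everything of total degree at least two is collected into $O(||(x,y)||^2)$.

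There is no genuine obstacle here: the statement is an exact Taylor computation, and the only thing requiring care is truncating each geometric series at the correct order so that the constant, linear, and quadratic coefficients are captured while the remainder is honestly bounded. The one point worth flagging is the asymmetry of the error orders, $O(||(x,y)||^3)$ in $\dot x$ against $O(||(x,y)||^2)$ in $\dot y$: this reflects that $\dot x$ has neither a constant nor a linear part (the fold is quadratic in $x$, whence one must keep one extra order to expose the $c_1x^2$ structure), whereas $\dot y$ already carries the nonzero constant $c_2$.
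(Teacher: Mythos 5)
Your proof is correct and takes essentially the same route as the paper: recenter at the fold point via $u=x$, $v=\frac{e_1}{a}+y$, Taylor-expand the rational terms $\frac{1}{x+e_1}$ and $\frac{1}{x+e_2}$ about $x=0$, and collect coefficients to obtain $c_1$, $c_2$ and the stated linear terms. The only difference is one of detail: you make explicit the cancellation of the linear terms in $\dot{x}$ and the coefficient extraction in $\dot{y}$, which the paper compresses into a single ``We find'' step.
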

\begin{proof}
We start with the change of variables 
\[u=x,\, v=\frac{e_1}{a}+y.\]
Plugging into \eqref{DLsystem} gives:
\begin{equation*}
\begin{array}{rcl}
\dot{x}&=&x(1-x)-a\frac{x}{e_1+x}(\frac{e_1}{a}+y)\\
\dot{y}&=&\epsilon(\frac{e_1}{a}+y) (1-\frac{e_1}{a(x+e_2)}-\frac{y}{x+e_2})\\
\dot{\epsilon}&=&0.
\end{array}
\end{equation*}
Then, we use the following Taylor development:
\begin{equation*}
\frac{1}{e_1+x}=\frac{1}{e_1}-\frac{1}{e_1^2}x+\frac{1}{e_1^3}x^2+o(x^2).
\end{equation*}
We find,
\begin{equation}
\label{eq:arroundfoldpoint}
\begin{array}{rcl}
\dot{x}&=&(\frac{1}{e_1}-1)x^2-\frac{a}{e_1}xy+O(x^3)+O(x^2y),\\\
\dot{y}&=&\epsilon(\frac{e_1}{a}(1-\frac{e_1}{ae_2})+ \frac{e_1^2}{a^2e_2^2}x+(1-\frac{2e_1}{ae_2})y+O(||(x,y)||^2)\\
\dot{\epsilon}&=&0,
\end{array}
\end{equation}
which gives the result.\\
Note that $c_1>0$ whereas $c_2<0$.
\end{proof}
We will now apply the blow-up technique. The blow-up technique is a change of variables which allows to desingularize the fold-point and visualize the trajectories in different charts.   We use the following  change of variables:
\[x=\bar{r}\bar{x}, y=\bar{r}^2\bar{y}, \epsilon=\bar{r}^3\bar{\epsilon} \] 
We obtain (we drop the bar):
\begin{equation*}
\label{eq:afterBlowUp}
\begin{array}{rcl}
\dot{r}x+r\dot{x}&=&c_1r^2x^2-\frac{a}{e_1}r^3xy+O(r^4x^2y)+O(r^3x^3)\\
2ry\dot{r}+r^2\dot{y}&=&r^3\epsilon(c_2+ \frac{e_1^2}{a^2e_2^2}rx+(1-\frac{2e_1}{ae_2})r^2y+O(||(rx,r^2y)||^2))\\
3r^2\epsilon\dot{r}+r^3\dot{\epsilon}&=&0
\end{array}
\end{equation*}
The chart $K_1$ is obtained by setting $\bar{y}=1$.
The chart $K_2$ is obtained by setting $\bar{\epsilon}=1$.
The chart $K_3$ is obtained by setting $\bar{x}=1$.\\
In order to prove theorem we need only to consider the chart $K_2$ which will be fundamental in our analysis.
When working ni chart $K_2$, we use the suscript $2$.\\
\textbf{Dynamics in chart $K_2$. }\\
\begin{proposition}
The dynamics in chart $K_2$ are given by the system:
\begin{equation}
\label{eq:K2}
\begin{array}{rcl}
\dot{x}_2&=&c_1x^2_2+O(r_2))\\
\dot{y}_2&=&c_2+O(r_2)\\
\dot{r}_2&=&0
\end{array}
\end{equation}
\end{proposition}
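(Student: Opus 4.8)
The plan is to obtain the three equations of \eqref{eq:K2} directly from the blown-up system displayed just before the chart definitions, by imposing the chart-$K_2$ condition $\bar\epsilon=1$ and then desingularizing. First I would settle the $r_2$-equation. Setting $\bar\epsilon=1$ forces $\epsilon=r^3$; since $\epsilon$ is a fixed parameter (we have appended $\dot\epsilon=0$), the radius $r=\epsilon^{1/3}$ is constant along trajectories, so $\dot r_2=0$. Equivalently, this is immediate from the third blown-up equation $3r^2\epsilon\dot r+r^3\dot\epsilon=0$ evaluated at $\epsilon=1$, $\dot\epsilon=0$.

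Next, with $\dot r_2=0$ the terms $\dot r\,x$ and $2ry\dot r$ in the first two blown-up equations drop out. I would then isolate $\dot x_2$ and $\dot y_2$ by dividing the surviving relations by $r$ and $r^2$ respectively and substituting $\epsilon=1$. Using the weighted scaling $x=rx_2$, $y=r^2y_2$ to track orders, the cubic remainder of the $x$-equation and the quadratic remainder of the $y$-equation each carry at least one extra power of $r$ beyond the leading term; this yields $\dot x_2=c_1\,r\,x_2^2+O(r^2)$ and $\dot y_2=c_2\,r+O(r^2)$, where the dot is still the derivative in the original time.

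Both right-hand sides share the common factor $r$, which is precisely the degeneracy the blow-up is designed to remove. The final step is the desingularization: I would rescale time by introducing $\tau_2$ with $\tfrac{d}{d\tau_2}=\tfrac1r\tfrac{d}{dt}$, dividing out this common factor. With the dot in \eqref{eq:K2} now read in the rescaled time $\tau_2$, this gives $\dot x_2=c_1x_2^2+O(r_2)$, $\dot y_2=c_2+O(r_2)$, and $\dot r_2=0$, as claimed.

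The main obstacle, and the only genuinely delicate point, is this last rescaling together with the order bookkeeping that justifies it: dividing by $r$ is legitimate only for $r>0$, yet the resulting vector field must extend smoothly across $r=0$ to define a flow on the blow-up locus. I would therefore verify carefully that, under the weighted substitution $x=rx_2$, $y=r^2y_2$, $\epsilon=r^3$, every neglected term in each equation is of strictly higher order in $r$ than the displayed leading term, so that after division by the common factor $r$ the remainders are uniformly $O(r)=O(r_2)$ on compact sets in $(x_2,y_2)$.
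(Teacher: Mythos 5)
Your proposal is correct and follows essentially the same route as the paper's own proof: set $\bar{\epsilon}=1$ in the blown-up system (which immediately gives $\dot{r}_2=0$), factor the common power of $r_2$ out of the $x_2$- and $y_2$-equations, and desingularize by the time rescaling $\tau=r_2t$. Your additional remarks on the order bookkeeping and the smooth extension of the rescaled field to $r_2=0$ are sound refinements of the same argument, which the paper leaves implicit.
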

\begin{proof}
Setting $\bar{\epsilon}=1$ in \eqref{eq:afterBlowUp} gives:
\begin{equation*}
\begin{array}{rcl}
\dot{x}_2&=&r_2(c_1x_2^2+O(r_2))\\
\dot{y}_2&=&r_2(c_2+O(r_2))\\
\dot{r}_2&=&0.
\end{array}
\end{equation*}
Then, we desinguralize the system  by a change of time $\tau=r_2 t$, which gives the result. 
\end{proof}
For $r_2=0$, we obtain: 
\begin{equation}
\label{eq:K2r=0}
\begin{array}{rcl}
\dot{x}_2&=&c_1x_2^2\\
\dot{y}_2&=&\frac{e_1}{a}(1-\frac{e_1}{ae_2})\\
\dot{r}_2&=&0.
\end{array}
\end{equation}
Equation \eqref{eq:K2r=0} is very important in our analysis since it shows how the trajectories cross the fold point.

\begin{proposition}
 The solution of system \eqref{eq:K2r=0} is:
\begin{equation}
\label{eq:sol:K2r=0}
\begin{array}{rcl}
x_2(t)&=&\frac{1}{x_2^{-1}(0)-c_1t}\\
y_2(t)&=&y_2(0)+c_2t\\
\end{array}
\end{equation}
i.e.
\begin{equation*}
\begin{array}{rcl}
x_2(t)&=&\frac{1}{x_2^{-1}(0)-c_1\frac{y_2(t)-y_2(0)}{c_2}}
\end{array}
\end{equation*}
or
\begin{equation*}
\begin{array}{rcl}
y_2(t)&=&y_2(0)+\frac{c_2}{c_1}(\frac{1}{x_2(0)}-\frac{1}{x_2(t)})
\end{array}
\end{equation*}
It follows that orbits have the following properties:
\begin{enumerate}
\item Every orbit has a horizontal asymptote $y = y_r$, where $y_r$ depends on the orbit
such that $x\rightarrow +\infty$ as $y$ approaches $y_r$ from above.
\item  Every orbit has a  vertical asymptote $x= 0^{+}$.
\item The point $(x_2(0),\alpha,0)$ is mapped to the point $(\delta, \alpha+\frac{c_2}{c_1}(\frac{1}{x_2(0)}-\frac{1}{\delta}))$.
\end{enumerate}

\end{proposition}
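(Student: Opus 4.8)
The plan is to exploit the fact that system \eqref{eq:K2r=0} is completely decoupled: the third equation keeps $r_2$ constant, the second is a pure quadrature, and the first is a scalar autonomous equation in $x_2$ alone. I would therefore integrate each component separately and then recombine by eliminating the time.

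First I would solve $\dot{x}_2 = c_1 x_2^2$ by separation of variables. Writing $-d(x_2^{-1}) = c_1\,dt$ and integrating from $0$ to $t$ gives $x_2^{-1}(t) = x_2^{-1}(0) - c_1 t$, which inverts to the stated formula $x_2(t) = (x_2^{-1}(0) - c_1 t)^{-1}$. The equation $\dot{y}_2 = c_2$ integrates immediately to $y_2(t) = y_2(0) + c_2 t$, recalling that $c_2 = \frac{e_1}{a}(1 - \frac{e_1}{ae_2})$. To obtain the two implicit relations between $x_2$ and $y_2$, I would eliminate $t$: solving the $y_2$-equation for $t = (y_2(t) - y_2(0))/c_2$ and substituting into the $x_2$-formula yields the first relation, while solving the $x_2$-relation for $c_1 t = x_2^{-1}(0) - x_2^{-1}(t)$ and inserting it into $y_2(t) = y_2(0) + c_2 t$ yields $y_2(t) = y_2(0) + \frac{c_2}{c_1}(x_2^{-1}(0) - x_2^{-1}(t))$.

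With the closed forms in hand, the three orbit properties follow from monotonicity and limit computations that use the signs $c_1 > 0$ and $c_2 < 0$ established in the previous proposition. For the horizontal asymptote, I would note that the forward solution $x_2(t)$ blows up at the finite time $t^* = (c_1 x_2(0))^{-1}$, and that along the orbit $y_2 = y_2(0) + \frac{c_2}{c_1}(x_2^{-1}(0) - x_2^{-1})$ tends to $y_r := y_2(0) + \frac{c_2}{c_1 x_2(0)}$ as $x_2 \to +\infty$; since $c_2/c_1 < 0$, the map $x_2 \mapsto y_2$ is strictly decreasing on $(x_2(0), +\infty)$, so $y_2 > y_r$ throughout and the limit is approached from above. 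For the vertical asymptote, letting $x_2 \to 0^+$ forces $x_2^{-1} \to +\infty$, and again because $c_2/c_1 < 0$ this drives $y_2 \to +\infty$, so the orbit hugs the line $x = 0$ as $y \to +\infty$. Finally, the transition map is read off directly from the relation $y_2 = y_2(0) + \frac{c_2}{c_1}(x_2^{-1}(0) - x_2^{-1})$ by setting $y_2(0) = \alpha$ and $x_2 = \delta$, which sends $(x_2(0), \alpha, 0)$ to $(\delta, \alpha + \frac{c_2}{c_1}(x_2^{-1}(0) - \delta^{-1}))$.

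Since every step is an elementary integration or limit, there is no genuine analytic obstacle; the only delicate points are bookkeeping ones. I would pay attention to the finite-time blow-up of $x_2$, so that the orbit is defined only on a half-open time interval, and above all to the sign of $c_2/c_1$, which is what makes the monotone approach to $y_r$ occur \emph{from above} rather than from below and which turns the $x_2 \to 0^+$ limit into $y_2 \to +\infty$. Getting these signs right is exactly what justifies the geometric picture — an orbit descending from the vertical asymptote $x = 0$ toward its horizontal asymptote $y = y_r$ — that underlies the subsequent canard analysis.
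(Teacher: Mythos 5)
Your proposal is correct and follows exactly the route the paper intends: the paper's own proof is the one-line remark that everything ``follows easily from the explicit solution,'' and your separation of variables, elimination of $t$, and sign analysis using $c_1>0$, $c_2<0$ are precisely the details being left to the reader. In particular, your careful treatment of the finite-time blow-up at $t^*=(c_1x_2(0))^{-1}$ and of the sign of $c_2/c_1$ (which forces the approach to $y_r$ from above and sends $y_2\to+\infty$ as $x_2\to 0^+$) correctly fills in the only nontrivial bookkeeping.
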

\begin{proof}
It follows easily from the explicit solution.
\end{proof}

\begin{proposition}
Solutions of \eqref{eq:K2} are $O(r)$-close of those of \eqref{eq:K2r=0}.
\end{proposition}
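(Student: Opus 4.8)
The plan is to read \eqref{eq:K2} as a one-parameter family of planar vector fields and then invoke the standard fact that solutions of an ODE depend continuously (indeed Lipschitz-continuously) on a parameter, which I would establish directly by a Gronwall estimate. The crucial structural observation is that, since $\dot r_2=0$ in \eqref{eq:K2}, the variable $r_2$ is a constant of motion, so \eqref{eq:K2} is the family
\[
\begin{pmatrix}\dot x_2\\ \dot y_2\end{pmatrix}=f(x_2,y_2,r_2),\qquad
f(x_2,y_2,r_2)=\begin{pmatrix}c_1 x_2^2\\ c_2\end{pmatrix}+r_2\,R(x_2,y_2,r_2),
\]
where $R$ gathers the $O(r_2)$ remainders and is smooth, hence locally Lipschitz, in all its arguments. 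Setting $r_2=0$ recovers exactly \eqref{eq:K2r=0}, whose right-hand side is $f(\cdot,\cdot,0)$.

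First I would fix a compact set $K\subset\R^2$ and a time $T>0$, and compare the solution $(x_2,y_2)$ of \eqref{eq:K2} with the solution $(\bar x_2,\bar y_2)$ of \eqref{eq:K2r=0} issued from the same initial condition. Writing $e(t)=(x_2(t)-\bar x_2(t),\,y_2(t)-\bar y_2(t))$, I would split the difference of the vector fields as
\[
\dot e=\bigl[f(x_2,y_2,r_2)-f(x_2,y_2,0)\bigr]+\bigl[f(x_2,y_2,0)-f(\bar x_2,\bar y_2,0)\bigr].
\]
As long as both trajectories remain in $K$, the first bracket is bounded by $C_K\,r_2$ with $C_K=\sup_K|R|$, since all the $r_2$-dependence is carried by the factor $r_2 R$; the second bracket is bounded by $L_K|e|$, where $L_K$ is a Lipschitz constant for $f(\cdot,\cdot,0)$ on $K$. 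Hence $|\dot e|\le C_K r_2+L_K|e|$ with $e(0)=0$, and Gronwall's lemma gives
\[
|e(t)|\le \frac{C_K r_2}{L_K}\bigl(e^{L_K t}-1\bigr)=O(r_2),\qquad t\in[0,T],
\]
uniformly on $[0,T]$, which is precisely the asserted $O(r_2)$ closeness.

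The main obstacle, and the only reason the statement is not completely routine, is that the $x_2$-equation $\dot x_2=c_1 x_2^2$ with $c_1>0$ produces solutions that blow up in finite time, so no global-in-time, global-in-space estimate is available. I would deal with this exactly as above, by confining the comparison to a fixed compact region $K$ of the chart $K_2$ and to a time interval on which the limiting trajectory of \eqref{eq:K2r=0}, given explicitly in \eqref{eq:sol:K2r=0}, stays strictly inside $K$; this is the only regime relevant to the passage through the fold point. On such a compact set $R$ is bounded and $f(\cdot,\cdot,0)$ is Lipschitz, so $C_K$ and $L_K$ are finite and the Gronwall argument closes. Finally, for $r_2$ small enough the Gronwall bound itself forces the perturbed trajectory $(x_2,y_2)$ to remain in a slightly enlarged compact set over the same interval, which justifies a posteriori that it does not escape before the estimate has been established.
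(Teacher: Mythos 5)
Your proof is correct and takes essentially the same approach as the paper: the paper's entire proof is the one-line appeal ``this follows from regular perturbation theory,'' and your Gronwall argument is precisely the standard proof of that fact, exploiting that $\dot{r}_2=0$ makes $r_2$ a constant parameter so that \eqref{eq:K2} is a regular $O(r_2)$ perturbation of \eqref{eq:K2r=0}. Your additional care in restricting to compact sets and finite time intervals to sidestep the finite-time blow-up of $\dot{x}_2=c_1x_2^2$ is exactly the (unstated) caveat under which the paper's $O(r)$-closeness claim must be read, so it fills in rather than departs from the paper's reasoning.
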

\begin{proof}
This follows from regular perturbation theory.
\end{proof}
\begin{remark}
Let us make a remark on the first statement of proposition 3.
For $t^*=\frac{1}{c_1x_2(0)}$, $x_2$ blows-up. Since $x_2=\frac{x}{r_2}$, and $r_2=\epsilon^{\frac{1}{3}}, x_2=+\infty $ correspond, when $\epsilon=0$ to a point $x>0$ where we can consider that trajectory has left the neighborhood of the fold and where the previous slow-fast analysis applies. This gives for $y_2$:
\begin{equation}
\label{eq:valueconnect}
y_2(t^*)=y_2(0)+\frac{c_2}{c_1x_2(0)}.
\end{equation}
This means, that fixing $x_2(0)$ and $y_2(0)$, the value where the trajectory leaves the slow manifold and connects the fast fiber is determined by \eqref{eq:valueconnect}. Therefore, if we choose $(x_2(0),y_2(0))$ on the limit-cycle, this determines the fast fiber followed by the limit-cycle. We will now detail this argument which gives the proof of theorem \ref{th:maintheorem}.  
\end{remark}
\begin{proof}[proof of theorem \ref{th:maintheorem}]
Fix a value $x$ far from $0$, let's say $x=\frac{1}{2}$. We want to determine $t^*$ such that $x(t^*)=\frac{1}{2}$, which corresponds to $x_2(t^*)=\frac{1}{2\epsilon^{\frac{1}{3}}}$. Taking $x(0)=k\epsilon+o(\epsilon)$, and  according to equation \eqref{eq:sol:K2r=0}, this gives:
\[t^*=\frac{\epsilon^\frac{1}{3}}{c_1}(\frac{1}{k\epsilon+o(\epsilon)}-2)\]
and for equation \eqref{eq:K2},
\[y_2(t^*)=y_2(0)+\frac{c_2\epsilon^\frac{1}{3}}{c_1}(\frac{1}{k\epsilon+o(\epsilon)}-2) +O(\epsilon),\]
which in original coordinates gives:
\[y(t^*)=y(0)+\frac{c_2}{kc_1}+O(\epsilon).\]
This proves the theorem.
\end{proof}

\begin{figure}[ht]
 \centering
  \includegraphics[scale=0.6]{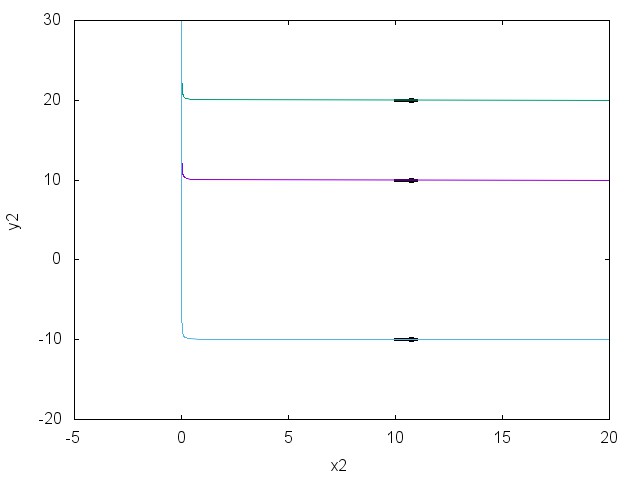}
  \label{fig:ChartK2}
\caption{Solutions of system \eqref{eq:K2r=0} (chart $K_2$) for  $a=1$
, $e_{1}=0.08,$ and $e_{2}=0.01$. Since we focus on $x>0$, we have only represented solutions on the half right plane. These solutions are defined on interval $]-\infty;\frac{1}{c_1x_2(0)}[$}
\end{figure}

\begin{figure}[ht]
 \centering
  \includegraphics[scale=0.6]{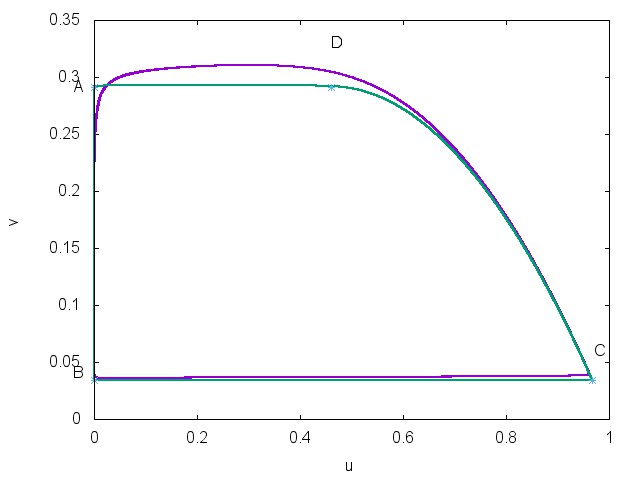}
  \label{figlimcyclvareps}
\caption{Solutions of system \eqref{DLsystem} with $a=1$
, $e_{1}=0.08,$ and $e_{2}=0.01$. Limit-cycles for $\epsilon=10^{-2}$ in purple and  $\epsilon=10^{-4}$ in green. As $\epsilon$ approaches $0$ the limit-cycle approaches $\gamma'$. Choosing $\alpha=0$, we obtain an approximate value of $k\simeq -\frac{c_2}{Dc_1}$ where $D$ is the distance between the coordinate of the fast-fiber followed by the limit cycle and $\frac{e_1}{a}$. Here, we find $D\simeq 0.045$ and $k\simeq 1.086 $, $(c_1=11.5,c_2=-0.56)$. This means that the limit-cycle crosses the axis $v=\frac{e_1}{a}$ at a value $1.086\epsilon+o(\epsilon)$. }
\end{figure}
\begin{remark}
Note that the folded node $P$ is at the intersection of the the two branches of the  manifold $M_0$, $v=g(u)$ and $u=0$. Note also that these two branches actually exchange their stability at $P$. This case has been treated in a general form in \cite{KS01-2} under the appropriate name of transcritical bifurcation. However, here we are precisely in the special case $\lambda=1$ excluded from theorem 2.1 of \cite{KS01-2}. The authors have announced the existence of the canard in this case without giving the detailed proof of it. Here, we have proved the canard phenomenon using the blow up technique in  the case of the limit-cycle of this classical model of predator-prey. 
\end{remark}

\section{Conclusion}
In this article, we have characterised the limit-cycle of the system  \eqref{DLsystem}. The system was originally introduced in \cite{az03}
as a modification of the Leslie-Gower model. We have proved that the limit-cycle of the model exhibits the canard phenomenon when crossing a special folded node as well as computed the value at which it reaches the fast fiber. In a forthcoming work, we hope to investigate the diffusive model obtained by adding a laplacian term in the first equation.

\begin{acknowledgements}
We would like to thank Region Haute-Normandie France and the 
ERDF (European Regional Development Fund)  project 
XTERM (previously RISK). We would like thank N. Popovic for discussions on the transcritical bifurcation phenomenon.  
\end{acknowledgements}


\begin{thebibliography}{}
 \bibitem{az03} M.A. Aziz-Alaoui and M. Daher Okiye, Boundedness
and global stability for a predator-prey model with modified
Leslie-Gower and Holling-type $II$ schemes, Appl. Math. Lett. 16
(2003) 1069-1075.
 \bibitem{Da04} M. Daher Okiye, \'Etude et analyse asymptotique de certains syst\`emes dynamiques non-lin\'eaires : application \`a des probl\`emes proie-pr\'edateurs. PhD thesis, Le Havre, 2004.
 \bibitem{BC81} E.. Benoit, J.-L. Callot, F. Diener and M. Diener, Chasse au canards, Collect. Math., 31-32 (1981) 37-119.
 \bibitem{Fe79} N. Fenichel, Geometric singular perturbation theory for ordinary differential equations. J. Differ. Equ. 31   (1979) 53-98.
 \bibitem{Ha91} I.L. Hanski, L. Hassen,  and H. Huttonen, Specialist Predation, generalist predation and the rodent microtine cycle, J. Animal Ecology 60 (1991) 353-367. 
(1992) 237-388.
 \bibitem{He10} G. Hek, Geometric singular perturbation theory in biological pratice, J. Math. Biol. 60
(2010) 347-386.
 \bibitem{Jo95} C.K.R.T. Jones 
Geometric singular perturbation theory. In: Johnson R (ed) Dynamical systems, Montecatibi Terme, Lecture Notes in Mathematics, Springer, Berlin. 1609 (1995)  44-118.
 \bibitem{Ka99} T.J. Kaper   An introduction to geometric methods and dynamical systems theory for singular perturbation problems. In: Cronin J, O’Malley RE Jr (eds) Analyzing multiscale phenomena using singular perturbation methods. Proc Symposia Appl Math, AMS, Providence,  56 (1999) 85-132.  
 \bibitem{KS01} M. Krupa and P. Szmolyan, Extending geometric singular perturbation theory to non-hyperbolic points-fold and canard points in two dimensions, SIAM. J. Math. Anal. 33 (2001) 286-314.
 \bibitem{KS01-2} M. Krupa and P. Szmolyan, Extending slow manifolds near transcritical and
pitchfork singularities
, Nonlinearity 14 (2001) 1473-1491.
 \bibitem{Le48} P.H. Leslie,  Some further notes on the use of matrices in population mathematics, Biometrica  35 (1948) 213-245.
 \bibitem{Le60} P.H. Leslie and J.C. Gower,  The properties of a stochastic model for the predator-prey type of interaction
between two species, Biometrica 47 (1960) 219-234.
\bibitem{Ma73}May R.M. Stability and complexity in model ecosystems. Princeton, NJ: Princeton University Press (1973).
\bibitem{RM63}Rosenzweig M.L. and MacArthur R.H. Graphical representation and stability conditions of predator–prey interaction. Amer.
Naturalist. 47 (1963) 209-223
 \bibitem{RM92} S. Rinaldi and S. Muratori, Slow-fast limit-cycles in predator-prey models, Ecological Modelling. 61
(1992) 237-388.
 \bibitem{SW01} P. Szmolyan and M. Wechselberger, Canards in R3, J. Differential Equations, 177 (2001) 419-453.
 \bibitem{Up97} R.K. Upadhyay and V. Rai, Why chaos is rarely observed in natural populationss, Chaos Solitons and Fractals. 8(12) (1997) 1933-1939. 
 \bibitem{Wi94} S. Wiggins Normally hyperbolic invariant manifolds in dynamical systems. Springer, New York (1994).




\end{thebibliography}


\end{document}